\documentclass{amsart}
\usepackage{amssymb,amsthm,amsmath,amstext,amsxtra}
\usepackage{bm}       
\usepackage{mathtools} 
\usepackage[all]{xy}
\usepackage{booktabs}
\usepackage{hyperref}
\hypersetup{colorlinks=true,urlcolor=blue,citecolor=blue,linkcolor=blue}
\usepackage{enumerate}
\usepackage{stmaryrd}
\usepackage{enumitem}
\usepackage{colonequals}
\usepackage{float} 
\usepackage[group-minimum-digits=4,group-separator=\text{,}]{siunitx}

\usepackage{wrapfig}
\usepackage[most]{tcolorbox}
\usepackage{graphicx} 
\usepackage{caption}
\usepackage{subcaption}
\newcommand{\polygons}{.}

\setlength{\textfloatsep}{6pt}

\hyphenation{non-de-gen-er-ate}

\newtheorem{theorem}{Theorem}[section]
\newtheorem{lemma}[theorem]{Lemma}
\newtheorem{cor}[theorem]{Corollary}

\newtheorem{prop}[theorem]{Proposition}

\theoremstyle{definition}

\newtheorem{example}[theorem]{Example}

\numberwithin{equation}{section}

\newcommand{\calO}{\mathcal{O}}
\newcommand{\CC}{\mathbb{C}}
\newcommand{\FF}{\mathbb{F}}
\newcommand{\Fp}{{\FF_p}}
\newcommand{\Fq}{{\FF_q}}

\newcommand{\PP}{\mathbb{P}}
\newcommand{\QQ}{\mathbb{Q}}
\newcommand{\Qq}{{\QQ_q}}
\newcommand{\RR}{\mathbb{R}}
\newcommand{\TT}{\mathbb{T}}
\newcommand{\ZZ}{\mathbb{Z}}
\newcommand{\Zq}{{\ZZ_q}}

\newcommand{\XFq}{\mathcal{X}}
\newcommand{\YFq}{\mathcal{Y}}
\newcommand{\UFq}{\mathcal{U}}
\newcommand{\XQq}{X}
\newcommand{\YQq}{Y}
\newcommand{\UQq}{U}
\newcommand{\VQq}{V}

\newcommand{\rig}{{\operatorname{rig}}}
\newcommand{\dR}{{\operatorname{dR}}}

\DeclareMathOperator{\Fil}{Fil}
\DeclareMathOperator{\Frob}{Frob}

\DeclareMathOperator{\Gr}{Gr}

\DeclareMathOperator{\Int}{Int}

\DeclareMathOperator{\Proj}{Proj}
\DeclareMathOperator{\rank}{rank}
\DeclareMathOperator{\Spec}{Spec}

\DeclareMathOperator{\Vol}{Vol}

\newcommand{\dd}{\,\mathrm{d}}

\usepackage{xcolor}
\definecolor{darkred}{HTML}{CC1F1F}
\definecolor{green}{rgb}{.4,.7,.4}
\definecolor{blue}{rgb}{.2,.6,.75}
\definecolor{pastelb}{HTML}{3333FF}

\definecolor{pastelyellow}{rgb}{0.992157, 0.552941, 0.235294}
\definecolor{pastelorange}{rgb}{0.941176, 0.231373, 0.12549}
\definecolor{pastelred}{rgb}{0.741176, 0., 0.14902}
\definecolor{darkbrown}{rgb}{0.25098, 0., 0.0745098}

\title[Zeta functions of toric hypersurfaces]{Zeta functions of nondegenerate hypersurfaces in toric varieties via controlled reduction in $p$-adic cohomology}

\author{Edgar Costa}
\address{Department of Mathematics, Massachusetts Institute of Technology, Cambridge, MA 02139, USA}
\email{edgarc@mit.edu}
\urladdr{\url{https://edgarcosta.org}}

\author{David Harvey}
\address{School of Mathematics and Statistics, University of New South Wales, Sydney NSW
2052, Australia}
\email{d.harvey@unsw.edu.au}
\urladdr{\url{http://web.maths.unsw.edu.au/~davidharvey/}}

\author{Kiran S. Kedlaya}
\address{Univ. of California, San Diego, 9500 Gilman Drive \#0112, La Jolla, CA 92093 USA}
\email{kedlaya@ucsd.edu}
\urladdr{\url{http://kskedlaya.org/}}
\thanks{
The first author was partially supported by the Simons Collaboration Grant \#550029.
The second author was supported by the Australian Research Council (grants DP150101689 and FT160100219).
The third author was supported by NSF (grants DMS-1101343, DMS-1501214); UC San Diego (Warschawski Professorship); and a Guggenheim Fellowship.
All three authors thank ICERM for its hospitality during fall 2015.}

\begin{document}

\begin{abstract}
We give an interim report on some improvements and generalizations of the  Abbott--Kedlaya--Roe method to compute the zeta function of a nondegenerate ample hypersurface in a projectively normal toric variety over $\Fp$ in linear time in $p$.
These are illustrated with a number of examples including K3 surfaces, Calabi--Yau threefolds, and a cubic fourfold.
The latter example is a non-special cubic fourfold appearing in the Ranestad--Voisin coplanar divisor on moduli space;
this verifies that the coplanar divisor is not a Noether--Lefschetz divisor in the sense of Hassett.
\end{abstract}

\maketitle

\section{Introduction}

We consider the problem of computing the zeta function $Z(\XFq, t)$ of an explicitly specified variety $\XFq$ over a finite field $\Fq$ of characteristic $p$.
For curves and abelian varieties, Schoof's method and variants
\cite{schoof-85, pila-90, 
adleman-huang-96, gaudry-harley-00,
gaudry-schost-04, gaudry-kohel-smith-11, gaudry-schost-12}
can compute $Z(\XFq , t )$ in time and space polynomial in $\log q$ and exponential in the genus/dimension; these have only been implemented for genus/dimension at most 2.
Such methods may be characterized as \emph{$\ell$-adic}, as they access the $\ell$-adic cohomology  (for $\ell \neq p$ prime) of the variety via torsion points; there also exist \emph{$p$-adic methods}
which compute approximations of the Frobenius action on $p$-adic cohomology (Monsky--Washnitzer cohomology),
and which have proven to be more viable in practice for large genus.
Early examples include Kedlaya's algorithm \cite{kedlaya-01} for hyperelliptic curves,
in which the time/space dependence is polynomial in the genus and quasi-linear in $p$,
and Harvey's algorithm \cite{harvey-07} which improves the dependence on $p$
to $p^{1/2+\epsilon}$.
These methods have been subsequently generalized \cite{gaudry-gurel-01, denef-vercauteren-06, denef-vercauteren-06b, harrison-12},
notably by Tuitman's algorithm \cite{tuitman-16, tuitman-17} which applies to (almost) all curves while keeping the quasi-linear dependence on $p$.
In another direction, Harvey \cite{harvey-14} has shown that when  computing the zeta functions of reductions of a fixed hyperelliptic curve over a number field, $p$-adic methods can achieve \emph{average} polynomial time in $\log p$
and the genus; this has been implemented in small genus \cite{harvey-sutherland-14a, harvey-sutherland-16}.

One advantage of $p$-adic methods over $\ell$-adic ones is that they scale much better to higher-dimensional varieties.
For example, there are several $p$-adic constructions that apply to \emph{arbitrary} varieties with reasonable
asymptotic complexity \cite{lauder-wan-08, harvey-15}, although we are not aware of any practical implementations.
Various algorithms, and some implementations, have been given using Lauder's \emph{deformation method}
of computing the Frobenius action on the Gauss--Manin connection of a pencil
 \cite{lauder-04, lauder-04b, gerkmann-07, hubrechts-07, 
 hubrechts-10, kedlaya-13,
 pancratz-tuitman-15, tuitman-18}.

In this paper, we build on an algorithm of Abbott--Kedlaya--Roe \cite{abbott-kedlaya-roe-10}
which adapts the original approach of \cite{kedlaya-01} to smooth projective hypersurfaces. Here, we add two key improvements.
\begin{itemize}
\item
We use \emph{controlled reduction} in de Rham cohomology, as described in some lectures of Harvey \cite{harvey-hypersurface1, harvey-hypersurface2, harvey-hypersurface3}, to preserve sparsity of certain polynomials, thus reducing the time (respectively, space) dependence on $p$ from polynomial to quasi-linear (respectively, $O(\log p)$).
The resulting \emph{controlled AKR method} was implemented, with further improvements, in Costa's Ph.D. thesis \cite{costa-thesis}, with examples of generic surfaces and threefolds over $\FF_p$ for $p\sim10^6$ \cite[Section 1.6]{costa-thesis}; by contrast, the largest $p$ used in \cite{abbott-kedlaya-roe-10} is $29$.
Costa and Harvey are currently preparing a paper on this method; meanwhile, Costa's GPL-licensed code is available on \textsc{GitHub} \cite{controlledreduction}, and is slated to be integrated into \textsc{SageMath} \cite{sage}.
\item
We also generalize to toric hypersurfaces, subject to a standard genericity condition called \emph{nondegeneracy}.
This greatly increases the applicability of the method while preserving much of its efficiency. Some previous attempts have been made to compute zeta functions in this setting, such as work of Castryck--Denef--Vercauteren \cite{castryck-denef-vercauteren-06} for curves and Sperber--Voight \cite{sperber-voight-13} in general; it is the combination 
with controlled reduction that makes our approach the most practical to date.
\end{itemize}
It may be possible to improve the dependence on $p$ to square-root (as in \cite{harvey-07}) or average polynomial time
(as in \cite{harvey-14}), but we do not attempt to do so here.

For reasons of space, we give only a summary of the algorithm, with further details to appear elsewhere.
In lieu of these details, we present a number of worked examples in dimensions 2-4
that demonstrate the practicality of this algorithm in a wide range of cases.
The results are based on an implementation in \texttt{C++}, using \texttt{NTL} \cite{ntl} for the underlying arithmetic operations.
Our examples in dimensions 2 and 3 were computed on
one core of a desktop machine with an \texttt{Intel(R) Core(TM) i5-4590 CPU @ 3.30GHz};
our sole example in dimension 4 was computed on one core of a server with an \texttt{AMD Opteron Processor 6378 @ 1.6GHz}.
(We have not yet optimized our vector-matrix multiplications in any way;
as a consequence, we observe a serious performance hit whenever the working moduli exceeds $2^{62}$.)

In dimensions 2 and 3, our examples are  \emph{Calabi--Yau varieties}, i.e., smooth, proper, simply connected varieties with trivial canonical bundle. In dimension 1, these are simply elliptic curves. In dimension 2, they are \emph{K3 surfaces}, 
whose zeta functions are of computational interest for various reasons.
For instance, these zeta functions can (potentially) be used to establish the infinitude of rational curves on a K3 surface (see the introduction to \cite{costa-tschinkel-14} for discussion); there has also been recent work on analogues of the Honda--Tate theorem, establishing conditions under which particular zeta functions are realized by K3 surfaces \cite{taelman-16, ito-16}.

As for Calabi--Yau threefolds, much of the interest in their zeta functions can be traced back to \emph{mirror symmetry} in mathematical physics. An early example is the work of Candelas--de la Ossa--Rodriguez Villegas \cite{candelas-ossa-villegas-01} on the Dwork pencil; a more recent example is \cite{doran-kelly-salerno-sperber-voight-whitcher-16},
in which (using $p$-adic cohomology) certain mirror families of Calabi--Yau threefolds are shown to have related zeta functions.

Our four-dimensional example is a cubic projective fourfold. Such varieties occupy a boundary region between rational and irrational varieties; it is expected that a rational cubic fourfold is \emph{special} in the sense of having a primitive cycle class in codimension 2. The geometry of special cubic fourfolds is in turn closely linked to that of K3 surfaces; in many cases, the Hodge structure of a K3 surface occurs (up to a twist) inside the Hodge structure of a special cubic fourfold, and (modulo standard conjectures) this implies a similar relationship between zeta functions. See \cite{hassett-16} for further discussion.

The specific example we consider is related to the geometry of the moduli space of cubic fourfolds over $\CC$. On this space, there exist various divisors consisting entirely of special cubic fourfolds; Hassett
calls these \emph{Noether--Lefschetz divisors} (by analogy with the case of surfaces). Recently, Ranestad--Voisin \cite{ranestad-voisin-17}
exhibited four divisors which they believed not to be Noether--Lefschetz, but only checked this in one case. Addington--Auel \cite{addington-auel} checked two more cases by finding in these divisors some cubic fourfolds over $\QQ$ with good reduction at 2, such that the zeta functions over $\FF_2$ show no primitive Tate classes in codimension 2.
By replacing the brute-force point counts of Addington--Auel with $p$-adic methods, we are able to work modulo a larger prime to find an example showing that the fourth Ranestad--Voisin divisor is not Noether--Lefschetz.

To sum up, the overall goal of this project is to vastly enlarge the collection of varieties for which computing the zeta function is practical. It is our hope that doing so will lead to a rash of new insights, conjectures, and theorems of interest to a broad range of number theorists and algebraic geometers.

\section{Toric hypersurfaces}

We begin by reviewing the construction of a projective toric variety from a lattice polytope.
For more details we recommend \cite{cox-little-schenck-11}.

Let $n \geq 1$ be an integer.
For any commutative ring $R$, let $R[x^{\pm}]$ denote the Laurent polynomial ring in $n$ variables $x_1, \dots, x_n$ with coefficients in $R$.
For $\alpha  \colonequals (\alpha_i) \in \ZZ^n$, we write $x^\alpha$ for the monomial $x_1 ^{\alpha_1} \cdots x_n ^{\alpha_n}$.
We denote the $R$-torus by $\TT^n _R := \Spec(R[x^{\pm}])$.

Let $\Delta \subset \RR^n$ be the convex hull of a finite subset of $\ZZ^n$ that is not contained in any hyperplane,
so that $\dim \Delta = n$.
For $r \in \RR$, let $r \Delta$ be the $r$-fold dilation of $\Delta$.
For an integer $d \geq 0$, let
\[
P_d \colonequals \langle   x^\alpha : \alpha \in d \Delta \cap \ZZ^n \rangle_R \quad (\mbox{resp.\,} P^{\Int} _d \colonequals \langle   x^\alpha : \alpha \in \Int(d \Delta) \cap \ZZ^n \rangle_R)
\]
be the free $R$-module on the set of monomials with exponents in $d \Delta \cap \ZZ^n$ (resp.  $\Int(d \Delta) \cap \ZZ^n$).
Define the $R$-graded algebras
\begin{equation*}
  P_\Delta := \bigoplus_{d = 0} ^{+\infty} P_d  \quad \text{and} \quad P^{\Int} _\Delta := \bigoplus_{d = 0} ^{+\infty} P^{\Int}_d .
\end{equation*}
with the usual multiplication in $R[x^{\pm}]$.
We define the polarized toric variety associated to $\Delta$ as the pair $(\PP_\Delta, \calO_\Delta)$, where  $\PP_\Delta \colonequals \Proj P_\Delta$ and $\calO_\Delta$ is the ample line bundle on $\PP_\Delta$ associated to the graded $P_\Delta$-module $P_\Delta (1)$.
Note that $P_\Delta$ and $P^{\Int} _\Delta$ admit $n$ commuting degree-preserving differential operators $\partial_i \colonequals x_i \frac{\partial}{\partial x_i}$ for $i = 1, \dots, n$.

In order to suppress some expository and algorithmic complexity,
we make the simplifying assumption that $\Delta$ is a \emph{normal} polytope; that is, the map
\begin{equation*}
  (\Delta \cap \ZZ^n)^d \to d\Delta \cap \ZZ^n:
  (x_1,\dots,x_d) \mapsto x_1 + \cdots + x_d
\end{equation*}
is surjective for $d \geq 1$.
This corresponds to the pair $(\PP_\Delta, \calO_\Delta)$ being
\emph{projectively normal}; this will be the case in our examples.
As a consequence, we have that $\calO_\Delta$ is indeed very ample.

\begin{example}
  Let $\Delta$ be the regular $n$-simplex, the convex hull of $0, e_1, \dots, e_n $.
We may then identify $P_{d}$ with the set of homogeneous polynomials of degree $d$ in $x_0,\dots,x_n$,
  by identifying  $x^\alpha \in P_{\Delta,d}$ with  the monomial $x_0^{d-\alpha_1-\cdots-\alpha_n} x_1^{\alpha_1} \cdots x_n^{\alpha_n}$;
  then $(\PP_\Delta, \calO_\Delta)$ is isomorphic to $(\PP^n _{R}, \calO(1))$.

  We obtain the weighted projective space $\PP(w_0, \dots, w_n)$ by taking
  $$\Delta = \{(x_0, \dots, x_n) \in \RR^{n + 1} : \sum_{i = 0}^n w_i x_i = w_0 \cdots w_n \}, \quad \text{see \cite[1.2.5]{dolgachev-82}}.$$

  We obtain $\PP^k_R \times_R \PP^r_R$ by taking $\Delta$ to be the Cartesian product of the regular $k$-simplex by the regular $r$-simplex  \cite[\S 2.4]{cox-little-schenck-11}.
\end{example}

We now turn our attention to toric hypersurfaces over $R = \FF_q$, the finite field with $q = p^a$ elements and characteristic $p$.
Let $\YFq$ be the hypersurface in $\TT^n _{\FF_q}$ defined by a Laurent polynomial $\overline{f} \in  \FF_q[x^{\pm}]$,  $\YFq \colonequals V(\overline{f}) \subset \TT^n _{\Fq}$.
Let
\begin{equation*}
 \operatorname{supp}{\overline{f}} = \{\alpha \in \ZZ^n : \overline{c}_\alpha \neq 0 \}
\end{equation*}
be the support of $\overline{f}$ in $\RR^n$; the convex hull of $\operatorname{supp}{\overline{f}}$
is the \emph{Newton polytope} of $\overline{f}$, which we denote by $\Delta$.
We will work under the hypothesis that $\overline{f}$ is \emph{$(\Delta-)$non\-degenerate}:\footnote{This condition was introduced by Dwork \cite{dwork-62} without a name;
the term \emph{nondegenerate} first appears in \cite{kouchnirenko, varchenko}. Synonyms include \emph{$\Delta$-regular} \cite[\S 4]{batyrev-93} and \emph{sch\"on} \cite{tevelev}.}
for all faces $\tau \subseteq \Delta$ (including $\Delta$ itself), the system of equations
\begin{equation*}
  \overline{f}_{\restriction \tau} = \partial_1 \overline{f}_{\restriction \tau} = \cdots = \partial_n \overline{f}_{\restriction \tau} = 0
\end{equation*}
has no solution in ${\overline{\FF}_q^{\times n}}$, where $\overline{\FF}_q$ denotes an algebraic closure of $\FF_q$.
Furthermore, nondegeneracy implies quasi-smoothness, see \cite[Definition 3.1 and Proposition 4.15]{batyrev-cox-94}.
For fixed normal $\Delta$ over an infinite field, this condition holds for generic $\overline{f}$.
Others have given point-counting algorithms under this assumption \cite{castryck-denef-vercauteren-06, sperber-voight-13}.

Let $\XFq \colonequals \Proj P_\Delta/\bigl(\overline{f}\bigr)$ denote the closure of $\YFq$ in $\PP_\Delta$
(placing $\overline{f}$ in degree 1)
and set $\UFq \colonequals \TT^n \backslash \YFq$.
Let $H^i _\rig$ denote the $i$th rigid cohomology group in the sense of Berthelot \cite{berthelot-97}.
The Lefschetz hyperplane theorem, combined with Poincar\'{e} duality, show that the map
\begin{equation*}
  H^{i} _\rig (\PP _\Delta ) \rightarrow H^{i} _\rig (\XFq ),
\end{equation*}
induced by the inclusion $\XFq \hookrightarrow \PP_\Delta$ is an isomorphism for $i \neq n - 1$ \cite[10.8]{batyrev-cox-94}.
This implies that the ``interesting'' part of the cohomology of $\XFq$ occurs in dimension $n - 1$ and consists of those classes that do not come from $P_\Delta$.
Denote by $PH^{n-1} _\rig (\XFq)$ the primitive cohomology group of $X$, defined by the (Frobenius-equivariant) exact sequence
\begin{equation*}
  0 \rightarrow H^{n-1} _\rig (\PP_\Delta) \rightarrow H^{n-1} _\rig (\XFq) \rightarrow PH^{n-1} _\rig (\XFq) \rightarrow 0
\end{equation*}
With this notation, we may write
\begin{equation*}
  Z(\XFq, t)  = Z(\PP_\Delta,  t) Q(t)^{(-1)^n}.
\end{equation*}
where
\begin{equation*}
  Q(t) \colonequals \det \bigl(1 - t \Frob_q | PH^{n-1} _\rig (\XFq) \bigr).
\end{equation*}
Thus given $\overline{f}$, we would like to compute $Q(t)$.

The cohomology group $PH^{n-1} _\rig (\XFq)$ is closely related to $H^{n} _\rig (\PP_\Delta \backslash \XFq)$.
For example, if $\PP_\Delta$ is a (weighted) projective space, as in \cite{abbott-kedlaya-roe-10} and \cite{costa-thesis}, the two cohomology groups are isomorphic; see \cite[10.11]{batyrev-cox-94}.

\section{de Rham cohomology of toric hypersurfaces}
In preparation for our use of $p$-adic cohomology to compute $Q(t)$, we give an explicit description of the algebraic de Rham cohomology of a nondegenerate toric hypersurface in characteristic zero. We take $R$ to be the ring $\Zq$, the ring of integers of $\Qq$, the unramified extension of $\QQ_p$ with residue field $\Fq$.

Let $f \in \Zq[x^\pm]$ be a lift of $\overline{f}$ to characteristic zero with the same support as $f$ (it will also be nondegenerate).
Consider $\YQq \colonequals V(f) \subset \TT \colonequals \TT _{\Qq}$ and $\XQq$, the closure of $\YQq$ in $\PP_\Delta$.
Write $\UQq \colonequals \TT \backslash \YQq$, and $\VQq \colonequals \PP_\Delta \backslash \XQq \simeq \Spec(A)$, where $A$ is the coordinate ring of $\VQq$; explicitly,
\begin{equation*}
  A \simeq \bigcup _{d = 0}^{+\infty} f^{-d} P_d.
\end{equation*}
Let $I_f$ be the ideal in $P_\Delta$ generated by $f, \partial_1 f, \dots, \partial_n f$.
We call $I_f$ the \emph{toric Jacobian ideal} and the quotient ring $J_f \colonequals P_\Delta/ I_f$ the \emph{toric Jacobian ring}.
Since $f$ is nondegenerate, the ideal $I_f$ is irrelevant in $P_\Delta$ and $\rank_{\Zq} J_f = n! \Vol(\Delta)$; furthermore, $(J_{f}) _d = 0$ for $d > n$ \cite[\S 4]{batyrev-93}.
If $\calO_\Delta$ is not very ample, then $I_f$ might not be generated in degree $1$ and we might have $(J_{f}) _d = 0$ only for $d \gg n$.

Let $\Omega^\bullet$ denote the logarithmic de Rham complex of $\VQq$ with poles along $\PP_{\Delta} \backslash \TT$.
Let $H^{\bullet}$ be the cohomology groups of $\Omega^\bullet$; these are naturally isomorphic to $H^{\bullet} _\dR( \VQq \cap \TT = \TT \backslash Y = \UQq)$ and $H^{\bullet} _\rig (\TT_{\Fq} \backslash \YFq = \UFq)$ \cite{kato-89}.

We now provide an explicit description of the group $H^n$, as in \cite[\S 6 and 7]{batyrev-93}, in which we will compute $Q(t)$.
Set
$$\omega \colonequals \frac{\dd x_1}{x_1} \wedge \cdots \wedge \frac{\dd x_n}{x_n} \in \Omega^{n},$$
and define the ascending filtration in $\Omega^n$ by
\begin{equation*}
  \Fil ^d \Omega^n \colonequals \left\{ g f^{-d} \omega : g \in P_d \right\}.
\end{equation*}
The associated graded ring
\[
  \Omega^n := \bigoplus_{d=0}^\infty \Gr^d \Omega^n, \qquad
  \Gr^d \Omega^n := \Fil^d \Omega^n / \Fil^{d-1} \Omega^n
\]
is then isomorphic to $P_\Delta/(f)$ (again placing $f$ in degree 1).

Equip $H^n$ with the filtration induced from $\Omega^n $,
and view $H^n$ as the quotient
of $\Omega^n$ by the $\Qq$-submodule generated by the relations
\begin{equation}
  \label{eq:relations}
  \frac{g}{f^d} \omega - \frac{g f}{f^{d+1}} \omega 
  \quad \text{and} \quad
  \frac{\partial_i(g)}{f^d} \omega -  \frac{d g \partial_i(f)}{f^{d+1}} \omega
\end{equation}
for each $i = 1, \dots, n$, each nonnegative integer $d$, and each $g \in P_{d}$.
From these relations, we see that
\begin{equation*}
\Gr^1 H^n \simeq P_1/(f) \quad \text{and} \quad \Gr^d H^n \simeq (J_f)_d \qquad (d > 1).
\end{equation*}
This gives a way to compute explicitly in $H^n$: for any $h \in (J_f) _{d + 1}$ with $d>n$, we can find a relation of the form
\begin{equation}
  \label{eq:griffithsreduction}
  d \frac{h}{ f^{\, d + 1}} \omega = d \frac{g_0 f + \sum_{i = 1} ^n g_i  \partial_i f}{f^{ d + 1}} \omega \equiv \frac{ d g_0 + \sum_{i = 1}^n \partial_i g_i  }{f^{ d }} \omega.
\end{equation}
because  $P_d \subset (I_f)_d$, so in $H^n$ we can reduce the pole order of any form to at most $n$.
This process was introduced for smooth projective hypersurfaces in \cite{griffiths-69} and attributed to Dwork;
it is commonly known as \emph{Griffiths--Dwork reduction}.

With the above representation of $H^n$, we may also identify $PH^{n-1} _\dR  (X)$ with $(P^{\Int} _\Delta + I_f)/I_f \subset H^n$, where the filtration by pole order is the Hodge filtration; see \cite[\S 9, \S 11]{batyrev-93, batyrev-cox-94}.

We now introduce a variation of Griffiths--Dwork reduction, called \emph{controlled reduction}.
This will be crucial for our application to $p$-adic cohomology, as careless application of Griffiths--Dwork reduction to a sparse form will easily lead to a dense form.
For $d = 1, \dots, n + 1$, choose a $\Zq$-linear splitting $P_d \approx (J' _f)_d  \oplus C_d$,
where $(J' _f)_d$ is a lift of $(J _f)_d$ into $P_d$.
Let $\rho_d \colon P_d \rightarrow (J'_f)_d$ and $\pi_{d,0}, \dots,  \pi_{d,n} \colon P_d \rightarrow P_{d-1}$ be $\Zq$-linear maps such that
\begin{equation*}
  g = \rho_d( g ) + \pi_{d,0}( g ) \cdot f + \sum_{i = 1} ^n \pi_{d,i} (g) \cdot \partial_i f ; \quad g \in P_d.
\end{equation*}
These maps may be constructed one monomial at a time.

\begin{prop}[Controlled reduction] \label{P:controlled reduction}
  Let  $x^\nu \in P_{1}$ and  $x^\mu \in P_{d}$ be two monomials and
  define the following $\Zq$-linear maps:
  \begin{equation*}
    \begin{aligned}
      R_{\mu, \nu}(g) &:= (d+n) \pi_{n+1,0}(x^\nu g) + \sum_{i=1}^n (\partial_i + \mu_i)(\pi_{n+1,i}(x^\nu g))
      \\
      S_{\nu}(g) &:= \pi_{n+1,0}(x^\nu g) + \sum_{i=1}^n \nu_i \pi_{n+1,i}(x^\nu g) 
    \end{aligned}
  \end{equation*}
  Then for any $g \in P_{n}$ and any nonnegative integer $j$, in $H^n$ we have
  \[
    g \frac{ x^{(j+1) \nu + \mu} }{f^{d+n+j+1}} \omega\equiv (d+n+j)^{-1} (R_{\mu, \nu}(g) + j S_{\nu}(g) ) \frac{ x^{j \nu + \mu} }{f^{d+n+j}} \omega.
  \]
  \end{prop}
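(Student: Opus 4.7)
The plan is to apply the Griffiths--Dwork reduction \eqref{eq:griffithsreduction} exactly once, after first decomposing $x^\nu g \in P_{n+1}$ against the generators of the toric Jacobian ideal $I_f$.

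Normality of $\Delta$ gives $(J_f)_d = 0$ for $d > n$, so $\rho_{n+1}$ vanishes on $P_{n+1}$ and
$$x^\nu g = \pi_{n+1,0}(x^\nu g)\, f + \sum_{i=1}^n \pi_{n+1,i}(x^\nu g)\, \partial_i f$$
in $P_{n+1}$, with no residual Jacobian-ring term. Multiplying both sides by $x^{j\nu+\mu} \in P_{d+j}$ writes the numerator on the left-hand side of the proposition as
$$g\, x^{(j+1)\nu+\mu} = G_0 f + \sum_{i=1}^n G_i \partial_i f,$$
where $G_0 := \pi_{n+1,0}(x^\nu g)\, x^{j\nu+\mu}$ and $G_i := \pi_{n+1,i}(x^\nu g)\, x^{j\nu+\mu}$, all in $P_{d+n+j}$.

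Feeding this into \eqref{eq:griffithsreduction} with parameter $d+n+j$ (so that the denominator on the left becomes $f^{d+n+j+1}$) gives
$$\frac{g\, x^{(j+1)\nu+\mu}}{f^{d+n+j+1}}\omega \equiv (d+n+j)^{-1} \frac{(d+n+j) G_0 + \sum_{i=1}^n \partial_i G_i}{f^{d+n+j}}\omega.$$
It remains to identify the numerator with $(R_{\mu,\nu}(g) + j S_\nu(g))\, x^{j\nu+\mu}$. For each $i$, the logarithmic Leibniz rule $\partial_i(x^\beta h) = x^\beta(\beta_i h + \partial_i h)$ gives
$$\partial_i G_i = \bigl[(\partial_i + \mu_i + j\nu_i)\pi_{n+1,i}(x^\nu g)\bigr] x^{j\nu+\mu},$$
and factoring out $x^{j\nu+\mu}$ reveals a bracket of the form $(d+n+j)\pi_{n+1,0}(x^\nu g) + \sum_i(\partial_i + \mu_i + j\nu_i)\pi_{n+1,i}(x^\nu g)$. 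Separating the $j$-independent and $j$-linear parts matches this with $R_{\mu,\nu}(g) + j S_\nu(g)$ by the definitions in the proposition.

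The argument is essentially a single Griffiths--Dwork step; the only point deserving care is verifying that normality eliminates the $\rho_{n+1}$-remainder, since otherwise a residual term at pole order $d+n+j+1$ would survive and the clean factorization by $x^{j\nu+\mu}$ would be lost.
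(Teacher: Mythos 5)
Your proof is correct and follows exactly the route the paper intends: its own proof is the one-line remark that the claim is ``straightforward from \eqref{eq:relations} and \eqref{eq:griffithsreduction},'' and your write-up is precisely that computation carried out (decompose $x^\nu g$ via $\pi_{n+1,\bullet}$, multiply by $x^{j\nu+\mu}$, apply one Griffiths--Dwork step, and use $\partial_i(x^\beta h)=x^\beta(\partial_i+\beta_i)h$ to split off the $j$-linear part). The only cosmetic quibble is that the vanishing of $(J_f)_{n+1}$, hence of $\rho_{n+1}$, is attributed in the paper to nondegeneracy of $f$ (with normality ensuring it already holds in degree $n+1$ rather than only in degrees $\gg n$), not to normality alone.
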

\begin{proof}
  This is straightforward from \eqref{eq:relations} and \eqref{eq:griffithsreduction}.
\end{proof}
Note that Proposition~\ref{P:controlled reduction} enables us to reduce the pole order of a differential form from $d+n+j+1$ to $d+n+j$ without increasing its total number of monomials; we can thus reduce the pole order of a sparse form without making it dense.
\begin{cor} \label{C:controlled reduction}
  With notation as in Proposition~\ref{P:controlled reduction}, let $k$ be  a positive integer. Then for any $g \in P_{n}$,
  \begin{equation*}
    g \frac{ x^{\mu + k \nu } }{f^{d+n+k}} \omega\equiv \frac{\prod_{j=0}^{k-1} (R_{\mu,\nu} + j S_{\nu})(g) }{\prod_{j=0}^{k-1} (d+n+j)}
    \frac{x^\mu}{f^{d+n}} \omega,
  \end{equation*}
  forming the composition product from left to right.
\end{cor}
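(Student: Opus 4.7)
The plan is a straightforward induction on $k \geq 1$. The base case $k = 1$ is exactly Proposition~\ref{P:controlled reduction} with $j = 0$: the term $0 \cdot S_\nu$ vanishes, the denominator is $d+n$, and the conclusion reads $g\,x^{\mu+\nu}/f^{d+n+1}\,\omega \equiv (d+n)^{-1} R_{\mu,\nu}(g)\,x^\mu/f^{d+n}\,\omega$, which matches the $k=1$ instance of the corollary.

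For the inductive step, I would first apply Proposition~\ref{P:controlled reduction} with $j = k-1$ to drop the pole order by one and peel off one copy of $x^\nu$:
\[
g\,\frac{x^{\mu + k\nu}}{f^{d+n+k}}\,\omega
\equiv \frac{1}{d+n+k-1}\,\bigl(R_{\mu,\nu} + (k-1)S_\nu\bigr)(g)\cdot\frac{x^{\mu + (k-1)\nu}}{f^{d+n+k-1}}\,\omega .
\]
Before iterating I need to confirm the hypothesis of Proposition~\ref{P:controlled reduction}, namely that $g' := (R_{\mu,\nu} + (k-1)S_\nu)(g)$ still lies in $P_n$. This is immediate from the definitions: for $g \in P_n$ we have $x^\nu g \in P_{n+1}$, each $\pi_{n+1,i}$ maps $P_{n+1} \to P_n$, and $\partial_i + \mu_i$ preserves degree. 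Applying the inductive hypothesis with $(g', k-1)$ in place of $(g, k)$ then rewrites the remaining form in terms of $x^\mu/f^{d+n}\,\omega$, and concatenating the two reductions gives the desired formula with the correct denominator $\prod_{j=0}^{k-1}(d+n+j)$.

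The only real bookkeeping point is matching the stated ordering of the operator product. In the procedure above, the factor $R_{\mu,\nu} + (k-1)S_\nu$ is applied to $g$ first (innermost), then $R_{\mu,\nu} + (k-2)S_\nu$, and so on down to $R_{\mu,\nu} + 0\cdot S_\nu$ at the outermost step. Reading this composition from outermost to innermost recovers the product $\prod_{j=0}^{k-1}(R_{\mu,\nu} + jS_\nu)$ formed left to right, which is precisely the convention fixed in the statement. I do not anticipate any substantive obstacle: the argument is a clean telescoping, and the non-commutativity of the operators $R_{\mu,\nu} + jS_\nu$ for varying $j$ is exactly what the left-to-right convention is designed to encode.
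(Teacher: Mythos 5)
Your proof is correct and is exactly the intended argument: the paper states the corollary without proof as an immediate iteration of Proposition~\ref{P:controlled reduction}, and your induction on $k$, with the check that $R_{\mu,\nu}$ and $S_\nu$ map $P_n$ to $P_n$, is precisely that iteration made explicit. The careful bookkeeping of the operator ordering (apply $j=k-1$ first, so the left-to-right product is $j=0,1,\dots,k-1$) matches the paper's convention.
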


Using Proposition~\ref{P:controlled reduction} amounts to performing linear algebra on matrices of size $\#(n\Delta \cap \ZZ^n) \sim n^n \Vol(\Delta)$.
One can reduce this by a factor of $n^n/n! \sim e^n$ at the expense of making the expression for the reduction matrix more convoluted;
compare \cite[Proposition 1.17 and 1.18]{costa-thesis}.

\section{Monsky--Washnitzer cohomology}

We now indicate how Monsky--Washnitzer cohomology, as introduced in \cite{monsky-washnitzer-68a, monsky-68b, monsky-68c}, provides a crucial link between algebraic de Rham cohomology and $p$-adic rigid cohomology,
by transferring to the former the canonical Frobenius action on the latter; see also \cite{vanderput-86}.
To simplify, we assume $p > \max\{n, 2\}$.

Let $A^\dagger$ denote the \emph{weak $p$-adic completion} of $A$, the ring consisting of formal sums $\sum_{d =0} ^{+\infty} g_d f^{-d}$ such that for some $a, b > 0$,
$g_d \in p^{\max\{0, \lfloor ad-b \rfloor\}} P_d$ for all $d \geq 0$.
We define the associated logarithmic de Rham complex $\Omega^{\dagger, \bullet}$ by
$\Omega^{\dagger, i} \colonequals \Omega^i \otimes_{A} A^\dagger;$
denote the cohomology groups of this complex by $H^{\dagger, \bullet}$.
We may then obtain $p$-adic Monsky--Washnitzer cohomology groups $H^{\dagger, \bullet} \otimes_\Zq \Qq$.
The map $\Omega^\bullet \otimes_{\Zq} \Qq \to \Omega^{\dagger,\bullet} \otimes_\Zq \Qq$ is a quasi-isomorphism \cite{monsky-70, vanderput-86,kato-89};
that is, the induced maps $H^i \otimes_{\Zq} \Qq \to H^{\dagger,i} \otimes_\Zq \Qq$ are isomorphisms.
We can thus identify the algebraic de Rham cohomology of $\UQq$ with the Monsky--Washnitzer cohomology of $\UFq$.

On the other hand, we also have
$ H^{\dagger,\bullet} \otimes_{\Zq} \Qq \simeq H^{\bullet} _\rig (\UFq)$
and the latter object is functorial with respect to geometry in characteristic $p$ \cite{berthelot-97}.
In this way, $H^{\dagger,i}$ receives an action of the Frobenius automorphism, which we can make explicit by constructing a lift $\sigma$ of the $p$-th power Frobenius on $\Fq$ to $A^\dagger$.
To do so, we take the Witt vector Frobenius on $\ZZ_q$ and
set $\sigma(\mu) = \mu^p$ for any monomial  $\mu \in P_\Delta$.
We then extend $\sigma$ to $A^\dagger$ by the formula
\begin{equation}
  \label{eqn:sigmadense}
  \sigma\left( \frac{g}{f^d} \right) := \sigma(g)\sigma(f)^{-d} = \sigma(g) \sum_{i \geq 0} {-d \choose i} \frac{(\sigma(f) - f^p)^i}{f^{p(d+i)}}.
\end{equation}
The above series converges (because $p$ divides $\sigma(f) - f^p$) and the definitions ensure that $\sigma$ is a semilinear  (with respect to the Witt vector Frobenius) endomorphism of $A^\dagger$.
We finally extend $\sigma$ to $\Omega^{\dagger,\bullet}$ by $\sigma(g \dd h) := \sigma(g) \dd(\sigma(h))$.

\section{Sketch of the algorithm}
\label{section:sketch}

We now indicate briefly how to use controlled reduction to compute the Frobenius action on the cohomology of nondegenerate toric hypersurfaces.
We start as in \cite[Proposition~4.1]{harvey-07}, by rewriting the Frobenius action in a sparser form.

\begin{lemma} \label{L:congruence}
  For any positive integers $d,N$ and $g \in P_d$, in $A^\dagger$ we have
  \[
    \sigma \left(
    \frac{g}{f^d} \right)
    \equiv
    \sum_{j=0}^{N-1} \binom{-d}{j} \binom{d+N-1}{d+j}  \sigma(g f^j) f^{-p(d+j)} \pmod{p^N}.
  \]
\end{lemma}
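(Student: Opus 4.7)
The plan is to derive the congruence directly from the explicit formula \eqref{eqn:sigmadense} for $\sigma(g/f^d)$, truncating the series at $i = N-1$ using $p$-divisibility and then regrouping into powers of $\sigma(f)$.

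First I would start from
\[
\sigma\!\left(\frac{g}{f^d}\right) = \sigma(g)\sum_{i\geq 0}\binom{-d}{i}\frac{(\sigma(f)-f^p)^i}{f^{p(d+i)}}.
\]
Since $\sigma$ reduces to the $p$-th power Frobenius modulo $p$, we have $\sigma(f)\equiv f^p\pmod p$, hence $(\sigma(f)-f^p)^i$ lies in $p^i\Zq[x^\pm]$. Therefore all terms with $i\geq N$ contribute zero modulo $p^N$, and we are reduced to
\[
\sigma\!\left(\frac{g}{f^d}\right) \equiv \sigma(g)\sum_{i=0}^{N-1}\binom{-d}{i}\frac{(\sigma(f)-f^p)^i}{f^{p(d+i)}} \pmod{p^N}.
\]

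Next, I would expand $(\sigma(f)-f^p)^i$ by the binomial theorem, giving
\[
\frac{(\sigma(f)-f^p)^i}{f^{p(d+i)}} = \sum_{j=0}^{i}\binom{i}{j}(-1)^{i-j}\frac{\sigma(f)^j}{f^{p(d+j)}},
\]
and swap the order of summation over $i$ and $j$ so that the quantity $\sigma(f)^j/f^{p(d+j)} = \sigma(f^j)/f^{p(d+j)}$ can be factored out. Combining with $\sigma(g)\sigma(f^j) = \sigma(gf^j)$, the claim reduces to the purely combinatorial identity
\[
\sum_{i=j}^{N-1}(-1)^{i-j}\binom{-d}{i}\binom{i}{j} \;=\; \binom{-d}{j}\binom{d+N-1}{d+j}.
\]

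The main (only mildly nontrivial) step is this binomial identity. I would prove it by first applying $\binom{-d}{i}\binom{i}{j}=\binom{-d}{j}\binom{-d-j}{i-j}$ to pull $\binom{-d}{j}$ out of the sum, then substituting $m=i-j$ and using $(-1)^m\binom{-d-j}{m}=\binom{d+j+m-1}{m}$ to rewrite the remaining sum as $\sum_{m=0}^{N-1-j}\binom{d+j+m-1}{m}$. The hockey-stick identity evaluates this to $\binom{d+N-1}{N-1-j}=\binom{d+N-1}{d+j}$, finishing the proof. I do not anticipate any real obstacle: the only subtlety is book-keeping the signs and index shifts, and confirming that the truncated sum actually represents an element of $A^\dagger$ (which it does, as it is a finite $\Zq$-linear combination of elements of $A$).
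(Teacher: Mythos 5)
Your proposal is correct and follows essentially the same route as the paper's proof (which truncates the series \eqref{eqn:sigmadense} using $p \mid \sigma(f)-f^p$, expands $(\sigma(f)-f^p)^i$ by the binomial theorem, swaps the order of summation, and collapses the inner sum $\sum_{i=j}^{N-1}\binom{d+i-1}{d+j-1}$ by the hockey-stick identity). Your combinatorial identity and its verification via $\binom{-d}{i}\binom{i}{j}=\binom{-d}{j}\binom{-d-j}{i-j}$ check out, so there is nothing to add.
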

\begin{proof}
  This follows from \eqref{eqn:sigmadense} by truncating the sum and then rewriting it formally; see \cite[Lemma 1.10]{costa-thesis}.
\end{proof}

In order to compute a $p$-adic approximation of the Frobenius action on $PH^{n-1}(\XFq)$,
we must first fix a basis of the latter; we do this by constructing a monomial basis for $PH^{n-1} _\dR (\XQq)$
via explicit linear algebra.
We then apply Frobenius to each basis element in the sparse truncated form given by Lemma~\ref{L:congruence}; recursively reduce the pole order using Corollary~\ref{C:controlled reduction}
(using $k=p$ as much as possible); and project to the chosen monomial basis.
The dominant step is controlled reduction,
which amounts to $O(p n^N \Vol(\Delta) )$ matrix multiplications of size $n! \Vol(\Delta)$ per basis element.

We will not address precision estimates in this report, except to note that the machinery of \cite[\S 3.4]{abbott-kedlaya-roe-10} applies.
In general, if we want $N$ digits of $p$-adic accuracy, we must apply Lemma~\ref{L:congruence} with $N$ replaced by $N' = N + O(n+ \log N)$ and work modulo $p^{O(N')}$.
Hence, with respect to $p$ alone, we expect our algorithm to run in quasi-linear time in $p$ and use $O(\log p)$ space.

\section{K3 surfaces}

We now turn our attention to examples, starting with K3 surfaces.
For $X$ a K3 surface, $\dim H^2(X) = 22$ and the Hodge numbers are $(1, 20, 1)$.
A common example of a K3 surface is a smooth quartic surface in $\PP^3$;
however, they also occur in other ways, such as hypersurfaces in weighted projective spaces.
Using a criterion of Miles Reid \cite{reid-80}, Yonemura \cite{yonemura-90} found the complete list of (polarized) weighted projective spaces in which a generic hypersurface is a K3 surface; there are 95 of these.
For toric varieties, the corresponding classification is that of reflexive 3-dimensional polytopes, of which there are $\num{4319}$
in all \cite{kreuzer-skarke-98}. 

In the following examples, we worked modulo $p^4$ in order to obtain $Q(t)$ with $2$ $p$-adic significant digits.
As a result, we observe a performance hit for $p > 2^{16}$.

\begin{example}
  \label{example:k3dwork}
  Consider the projective quartic surface $\XFq \subset \PP^3 _{\FF_p}$ defined by
  $$x ^4 + y ^4 + z ^4 + w ^4 + \lambda x y z w = 0;$$
  it is a member of the Dwork pencil. For $p = 2^{20} -3$ and $\lambda = 1$, using the \emph{controlled AKR algorithm} in \texttt{22h7m} we compute that
  $$ Z(\XFq, t)^{-1} = (1 - t) (1 - p t)^{16}  (1 + p t)^{3}  (1 - p^2 t) Q(t),$$
  where the ``interesting'' factor is
  $$Q(t) = (1 + p t )(1 - 1688538 t + p^2 t^2).$$
  For this family, the remaining factors, apart from $Q(t)$, could have also been deduced by a $p$-adic formula of de la Ossa–-Kadir \cite[Chapter 6]{kadir-thesis}.
  In this context,  the Hodge numbers of $PH^{2}(\XFq)$ are $(1,19,1)$.
  {\makeatletter
    \let\par\@@par
    \par\parshape0
    \everypar{}
    \begin{wrapfigure}{r}{0.32\textwidth}
      \vspace{-1em}
      \tcbox[colback=white, top=2pt,left=0pt,right=0pt,bottom=2pt, boxrule=0.2mm, on line]{
        \includegraphics[width=0.28\textwidth]{\polygons/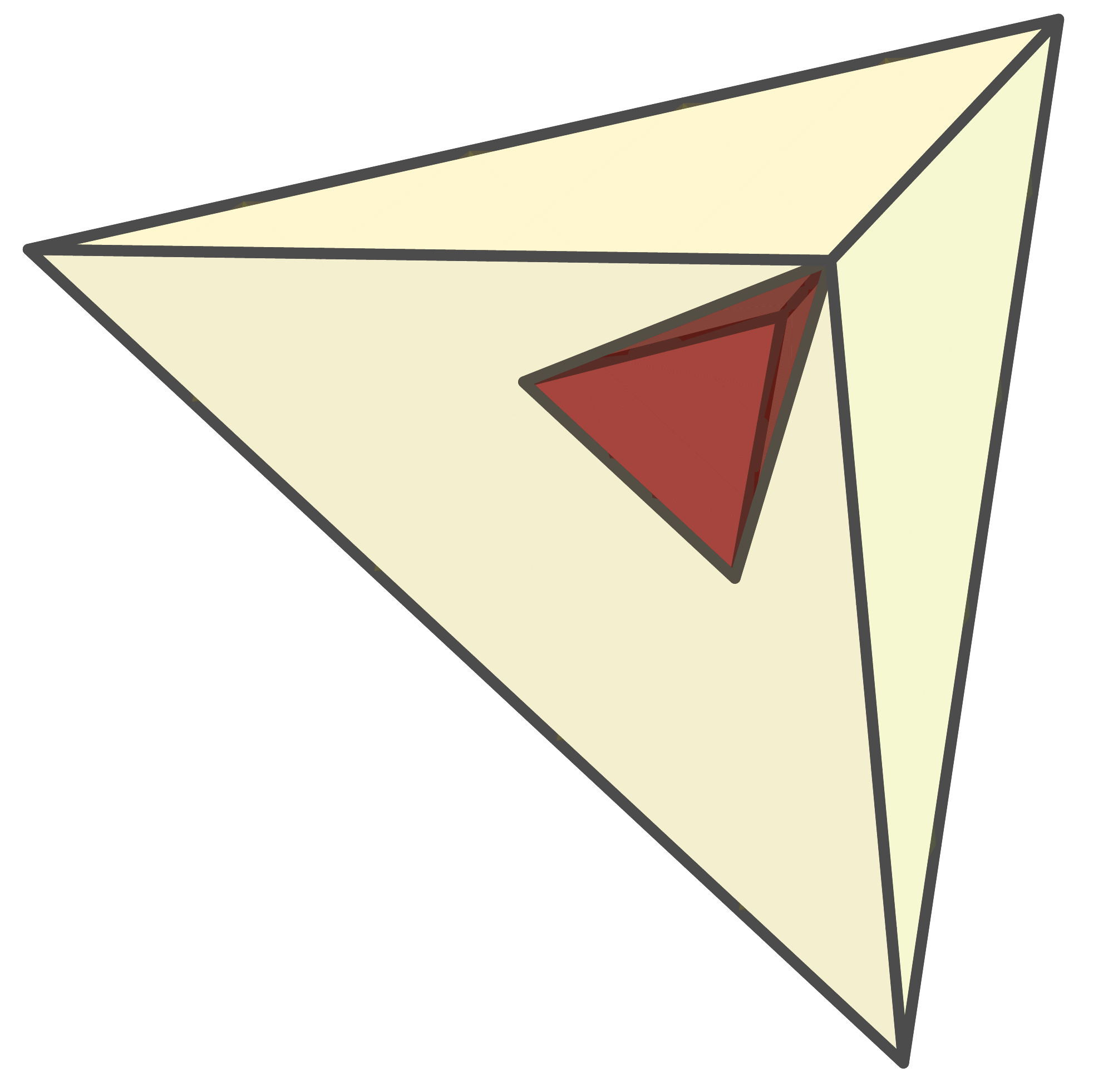}
      }
    \end{wrapfigure}
    A similar runtime would be expected if we used our current implementation to compute $Z(\XFq, t)$ with $\Delta$ being the 3-simplex (tetrahedron), as indicated by the outer polytope at right.
    Instead, we observe that the monomials defining $\XFq$ generate a sublattice of index $4^2$ in $\ZZ ^3$;
    hence, we can instead run our algorithm with a polytope of significantly smaller volume ($32/3 \approx 10.66$ versus $2/3 \approx 0.66$), as indicated by the inner polytope at right. This leads to a dramatic speedup:
    with our current implementation, we computed $Q(t)$ in \texttt{1m33s}.
  \par}%
    \noindent
    We present the running times for other $p$ in Table~\ref{table:k3dwork}; memory usage is about 16MB.

    In the new framework, $\XFq$ is given by the closure (in $\PP_\Delta$) of the affine surface defined by the Laurent polynomial
    \begin{equation*}
      x^{4} y^{-1} z^{-1} + \lambda x + y + z + 1,
    \end{equation*}
    and the Hodge numbers of $PH^{2}(\XFq)$ are $(1,1,1)$, which explains why $\deg Q(t) = 3$.

    Since the Dwork pencil is a ``small'' deformation of the Fermat quartic, we may also use
    the Pancratz--Tuitman implementation of the \emph{deformation method} \cite{pancratz-tuitman-15}
    to compute $Z(\XFq, t)$.
    We did this and verified that our results agree; we compare running times in Table~\ref{table:k3dworkdeformation}.
    To interpret these fairly, note that Pancratz--Tuitman work in $\PP^3$ and so compute the whole numerator of $Z(\XFq, t)$ rather than just $Q(t)$. 
(Note that the algorithm of \cite{tuitman-18} has a square-root dependence on $p$,
    as in \cite{harvey-07}.)

    \begin{table}[h]
      \begin{tabular}{lrr|lr}
        $p$ & CHK time & PT time
            &
        $p$ & CHK time \\
        \hline
        $2^{8} - 5$ & \texttt{0.03s} & \texttt{1.65s}
                    &
        $2^{17} - 1$  &  \texttt{11.9s}
        \\
        $2^{9} - 3$ & \texttt{0.04s} & \texttt{3.64s}
                    &
        $2^{18} - 5$ & \texttt{23.4s}
        \\
        $2^{10} - 3$ & \texttt{0.04s} & \texttt{7.39s}
                     &
        $2^{19} - 1$ & \texttt{46.9s}
        \\
        $2^{11} - 9$ & \texttt{0.06s} & \texttt{14.65s}
                     &
        $2^{20} - 3$ & \texttt{1m33s}
        \\
        $2^{12} - 3$ & \texttt{0.08s} & \texttt{34.80s}
                     &
        $2^{21} - 9$ & \texttt{3m6s}
        \\
        $2^{13} - 1$  &  \texttt{0.13s} & \texttt{34.80s}
                      &
        $2^{22} -3 $ & \texttt{6m15s}

        \\
        $2^{14} - 3$  &  \texttt{0.22s} & \texttt{2m33s}

        \\
        $2^{15} - 19$ &  \texttt{0.41s} & \texttt{6m43s}

        \\
        $2^{16} - 15$ &  \texttt{5.72s} & \texttt{14m14s}
        
      \end{tabular}
      \caption{The second and fifth columns use our current implementation to compute $Q(t)$.
        The third column uses the Pancratz--Tuitman implementation \cite{pancratz-tuitman-15} to compute $Z(\XFq,t)$.
      }
    \label{table:k3dwork}
    \label{table:k3dworkdeformation}
    \end{table}
\end{example}

\vspace{-24pt}
\begin{example}
  \label{example:c2f2}
  Consider the projective quartic surface $\XFq \subset \PP^3 _{\FF_p}$ defined by
  $$x^3 y + y^4 + z^4 + w^4 - 12  x y z w;$$
  it contains a hypergeometric motive
  (see \cite[Section 5]{doran-kelly-salerno-sperber-voight-whitcher-16}).
For $p = 2^{15} - 19$, using the \emph{controlled AKR algorithm} in \texttt{27m12s} we compute that
\begin{multline*}
  Z(\XFq, t)^{-1} = (1 - t)  (1 - p t)^{2}  (1 +  p t)^{2}  (1 - p t + p^2 t^2)^2
  (1 - p^2 t^2 + p^4 t^4)^2  ( 1 - p^2 t) Q(t),
\end{multline*}
where the ``interesting'' factor is (up to rescaling)
$$
p Q(t /p ) = p  + 20508t^{1} - 18468t^{2} - 26378t^{3} - 18468t^{4} + 20508t^{5} + p t^6.
$$
{\makeatletter
  \let\par\@@par
  \par\parshape0
  \everypar{}
  \begin{wrapfigure}{r}{0.39\textwidth}
    \vspace{-0.5em}
    \tcbox[colback=white, top=2pt,left=0pt,right=0pt,bottom=2pt, boxrule=0.2mm, on line]{
      \includegraphics[width=0.36\textwidth]{\polygons/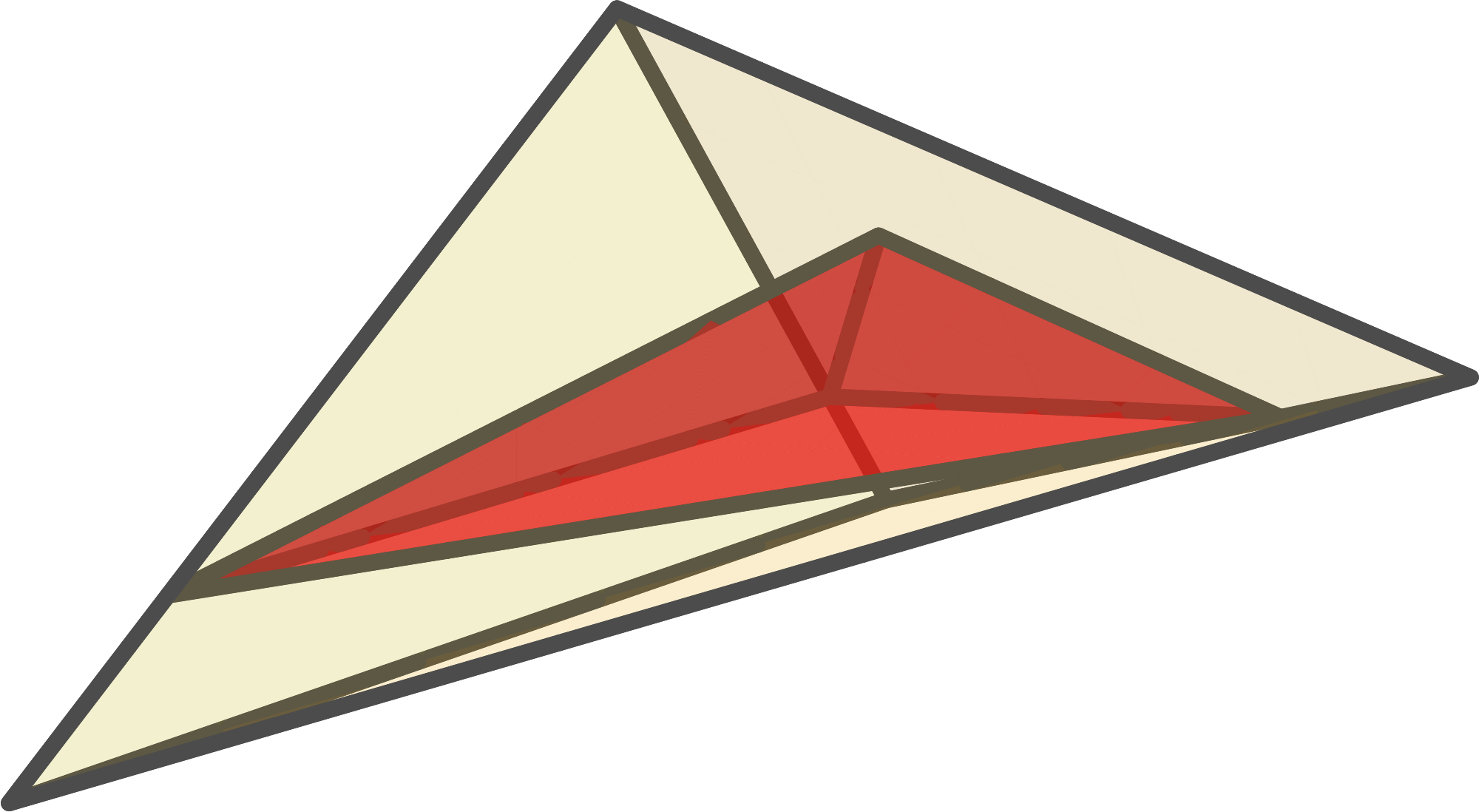}
    }
  \end{wrapfigure}
  As in the previous example, the Newton polytope has volume $8$, but the defining monomials generate a sublattice of index $4$ in $\ZZ ^3$; we may thus work instead with a polytope of volume $2$ (depicted at right) and observe a significant speedup.
  In this setting, the Hodge numbers of $PH^{2}(\XFq)$ are $(1,4,1)$.
  With our current implementation  we  computed $Q(t)$ in \texttt{4s}.
  We present the running times for other $p$ in Table~\ref{table:k3c2f2}, where the memory footprint was about 52MB.
\par}%
Alternatively, one could try to use \textsc{Magma} \cite{magma} to confirm $Q(t)$.
Unfortunately, \textsc{Magma} is only able to confirm the linear coefficient:
\begin{verbatim}
      > C2F2 := HypergeometricData([6,12], [1,1,1,2,3]);
      > EulerFactor(C2F2, 2^10 * 3^6, 2^15 - 19: Degree:=1);
      1 + 20508*$.1 + O($.1^2)
\end{verbatim}
\begin{table}[h]
  \begin{tabular}{lr|lr|lr}
    $p$ & time & $p$ & time & $p$ & time \\
    \hline
    $2^{8} - 5$  & \texttt{0.20s} & $2^{13} - 1$ &  \texttt{1.12s} & $2^{18} - 5$ & \texttt{4m54s}  \\
    $2^{9} - 3$  & \texttt{0.23s} & $2^{14} - 3$ &  \texttt{2.08s} & $2^{19} - 1$ & \texttt{9m46s}  \\
    $2^{10} - 3$ & \texttt{0.29s} & $2^{15} - 19 $& \texttt{4.00s} & $2^{20} - 3$ & \texttt{19m32s}  \\
    $2^{11} - 9$ & \texttt{0.41s} & $2^{16} - 15$ & \texttt{1m11s} & $2^{21} - 9$ &  \texttt{38m58s}  \\
    $2^{12} - 3$ & \texttt{0.64s} & $2^{17} - 1$ &  \texttt{2m30s} & $2^{22} -3 $ & \texttt{1h18m}  \\
  \end{tabular}
  \caption{Running times for Example~\ref{example:c2f2}.}
  \label{table:k3c2f2}
\end{table}
\end{example}

\begin{example}
  \label{example:k3nonwps}
  Consider the closure $\XFq$ in $\PP_\Delta$ (which in this case is not a weighted projective space) of the affine surface defined by the Laurent polynomial
  \begin{multline*}
    3 x + y + z + x^{-2} y^{2} z + x^{3} y^{-6} z^{-2} + 3 x^{-2} y^{-1} z^{-2}\\
    -2 - x^{-1} y - y^{-1} z^{-1} - x^{2} y^{-4} z^{-1} - x y^{-3} z^{-1};
  \end{multline*}
  it is a K3 surface of geometric Picard rank 6, and the Hodge numbers of $PH^{2}(\XFq)$ are $(1,14,1)$.
  For $p = 2^{15} - 19$,
  using our current implementation, in \texttt{6m20s} we obtain
  the ``interesting'' factor of $Z(\XFq, t)$:
  \begin{equation*}
    \begin{aligned}
      p Q(t/p) =  (1 - t) \cdot (1 + t) 
      & \cdot (p + 33305t^{1} + 1564t^{2} - 14296t^{3} - 11865t^{4} \\
      &+ 5107t^{5} + 27955t^{6} + 25963t^{7} + 27955t^{8} + 5107t^{9}
    \\&- 11865t^{10} 
      - 14296t^{11} + 1564t^{12} + 33305t^{13} + p t^{14}).
    \end{aligned}
  \end{equation*}
  {\makeatletter
    \let\par\@@par
    \par\parshape0
    \everypar{}
    \begin{wrapfigure}{r}{0.28\textwidth}
      \vspace{-1.5em}
      \tcbox[colback=white, top=2pt,left=0pt,right=0pt,bottom=2pt, boxrule=0.2mm, on line]{
        \includegraphics[width=0.25\textwidth]{\polygons/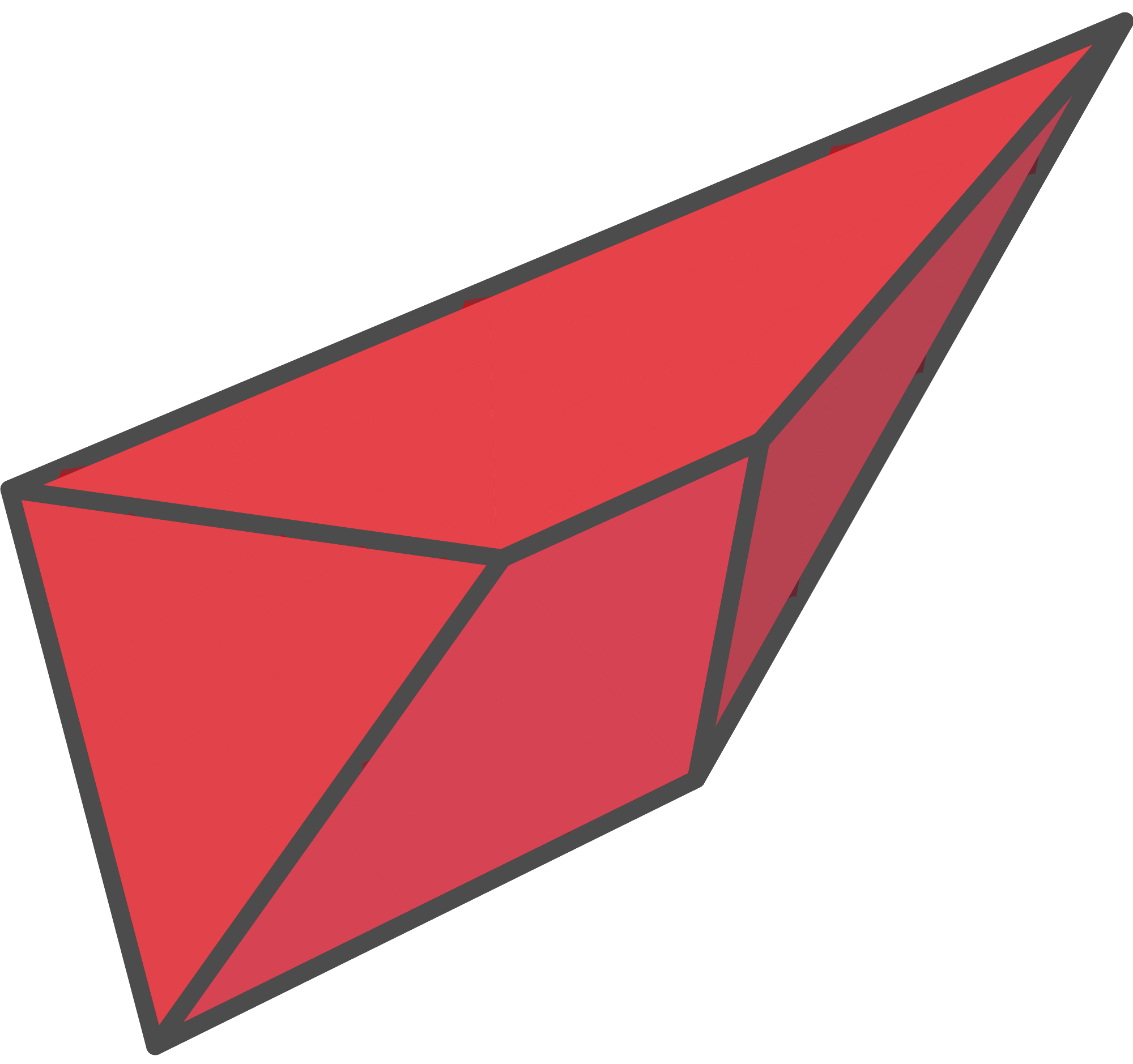}
      }
    \end{wrapfigure}
    \noindent
    We present the running times for other $p$ in Table~\ref{table:k3nonwps}, where the peak memory usage was about 144MB.

    The vertices of the associated polytope correspond to the first six terms displayed; the remaining terms are interior points.
    We depict this polytope of volume 8 at right.

    We know of no previous algorithm that can compute $Z(\XFq, t)$ for $p$ in this range. The defining polynomial is ``dense'' from the point of the Sperber--Voight  algorithm \cite{sperber-voight-13}, which is based on Dwork cohomology
    and scales
  \par}%
  \noindent with the number of monomials away from the vertices of the Newton polytope.

  \begin{table}[h]
    \begin{tabular}{lr|lr|lr}
      $p$ & time & $p$ & time & $p$ & time \\
      \hline
      $2^7 - 1$   & \texttt{6.46s} & $2^{10} - 3$ &  \texttt{18.93s} & $2^{13} - 1$ & \texttt{1m46s}  \\
      $2^{8} - 5$ & \texttt{9.50s} & $2^{11} - 9$ &  \texttt{31.34s} & $2^{14} - 3$ & \texttt{3m24s}  \\
      $2^{9} - 3$ & \texttt{12.64s} & $2^{12} - 3$ &  \texttt{56.23s} & $2^{15} - 19$ & \texttt{6m20s}  \\
    \end{tabular}
    \caption{Running times for Example~\ref{example:k3nonwps}.}
    \label{table:k3nonwps}
  \end{table}
\end{example}

\vspace{-36pt}
\begin{example}
  \label{example:k3dense}
  Let $\XFq$ be the smooth projective surface in $\PP^3$ defined by the fully dense, randomly chosen quartic polynomial
  \begin{multline*}
    -9 x^4 - 10 x^3 y - 9 x^2 y^2 + 2 x y^3 - 7 y^4 + 6 x^3 z + 9 x^2 y z - 2 x y^2 z + 3 y^3 z \\
    + 8 x^2 z^2 + 6 y^2 z^2  + 2 x z^3 + 7 y z^3 + 9 z^4 + 8 x^3 w + x^2 y w - 8 x y^2 w \\
    - 7 y^3 w
    + 9 x^2 z w
    - 9 x y z w + 3 y^2 z w - x z^2 w - 3 y z^2 w + z^3 w - x^2 w^2 \\
    - 4 x y w^2 - 3 x z w^2 + 8 y z w^2 - 6 z^2 w^2 + 4 x w^3 + 3 y w^3 + 4 z w^3 - 5 w^4;
  \end{multline*}
then $\Delta$ is the 3-simplex (tetrahedron) of volume $32/3 \approx 10.66$.
  For this example, we have $PH^{2} (\XFq) \simeq H^3(\PP^3 \backslash \XFq)$, the Hodge numbers are $(1,19,1)$, and
  \begin{equation*}
    Z(\XFq, t)^{-1} =  (1 - t) (1 - p t)  (1 - p^2 t) Q(t)
  \end{equation*}
  where $\deg Q(t) = 21$.
  For $p = 2^{15} - 19$, we obtain
  \begin{align*}
    p Q(t/p) =
      &(1 + t)
       \bigl ( p \!-\! 53159t^{1} \!+\! 10023t^{2} \!-\ 3204t^{3} \!+\! 49736t^{4} \!-\! 56338t^{5} \!+\! 43086t^{6} \\ &\!-\! 48180t^{7} \!+\! 44512t^{8} \!-\! 42681t^{9} \!+\! 47794t^{10} 
     \!-\! 42681t^{11} \!+\! 44512t^{12} \!-\! 48180t^{13} \\ &\!+\! 43086t^{14}
     \!-\! 56338t^{15} \!+\! 49736t^{16} \!-\! 3204t^{17} \!+\! 10023t^{18} \!-\! 53159t^{19} \!+\! p t^{20} \bigr)
  \end{align*}
using the \emph{controlled AKR algorithm} in \texttt{38m27s}; our current implementation takes roughly the same time. 
   We present the running times for other $p$ in Table~\ref{table:k3dense}. 
   The memory footprint was about 230MB.
   
  Unfortunately, the \emph{deformation method} is not suitable for dense quartics with $p$  in this range.
  For example, for $p = 31$ the running time was \texttt{2h8m} and its memory footprint was around 7GB, and both time and space should scale linearly with $p$.
  \begin{table}[h]
    \begin{tabular}{lr|lr|lr}
      $p$ & time & $p$ & time & $p$ & time \\
      \hline
      $2^7 - 1$ & \texttt{25.41s} & $2^{10} - 3$ &  \texttt{1m30s} & $2^{13} - 1$ & \texttt{9m26s}  \\
      $2^{8} - 17$ & \texttt{37.73s} & $2^{11} - 9$ &  \texttt{2m37s} & $2^{14} - 3$ & \texttt{18m42s}  \\
      $2^{9} - 3$ & \texttt{55.82s} & $2^{12} - 3$ &  \texttt{4m50s} & $2^{15} - 19$ & \texttt{36m29s}  \\
    \end{tabular}
    \caption{Running times for Example~\ref{example:k3dense}.}
    \label{table:k3dense}
  \end{table}
\end{example}

\vspace{-36pt}
\section{Calabi--Yau threefolds}

We next consider Calabi--Yau threefolds. Unlike for K3 surfaces, the middle Betti numbers of Calabi--Yau threefolds are not \emph{a priori} bounded; the largest value of which we are aware is 984 (found in  \cite{kreuzer-skarke-00}).

A common example is a smooth quintic surface in $\PP^4$.
Again, additional constructions arise from generic hypersurfaces in weighted projective spaces, of which there are $\num{7555}$ in all, or more generally from toric varieties corresponding to reflexive 4-dimensional polytopes, of which there are $\num{473800776}$ in all \cite{kreuzer-skarke-00}.

In all of the following examples, we worked modulo $p^6$ in order to obtain $Q(t)$ and our memory footprint ranged between 100MB and 270MB.
\begin{example}
  \label{example:3folddwork}
    Consider the projective quintic threefold $\XFq \subset \PP^3 _{\FF_p}$ defined by
  $$ x_0 ^5 + x_1 ^5 + x_2 ^5 + x_3 ^5 + x_4 ^ 5 + x_0 x_1 x_2 x_3 x_4 = 0;$$
  it is a member of the Dwork pencil. We have
  $$ Z(\XFq, t) = \frac{R_1(p t)^{20} R_2(p t) ^{30} Q(t)}{ (1 - t) (1 - p t) (1 - p^2 t) (1 - p^3 t) } $$
  where $R_1$ and $R_2$ are the numerators of the zeta functions of certain curves given by a formula of Candelas--de la Ossa--Rodriguez Villegas \cite{candelas-ossa-villegas-01}.
  
  As it is presented, we would work with $\PP _\Delta = \PP^4$ where $\Delta$ is the $4$-simplex of volume $625/24$.
  As in Example~\ref{example:k3dwork}, the monomials of the equation generate a sublattice of index $5^3$ in $\ZZ ^4$,
  so we may instead work with a polytope whose volume is smaller by a factor of $5^3$. 
  For $p =2^{20} - 3$, we compute the ``interesting'' factor
  $$Q(t) = 1  -1576492860 t^1 + 2672053179370 p t^2  -1576492860 p^3 t^3 + p^6 t^4$$
  in \texttt{11m18s}; if we instead had tried to apply the \emph{controlled AKR algorithm} to compute $Q(t)$ (and not the other factors) we extrapolate that it would take us at least 120 days.
  We present the running times for other $p$ in Table~\ref{table:3folddwork}.
  
  Since this is a ``small'' perturbation of the Fermat threefold,
  we again attempted to confirm these results using the \emph{deformation method}; however, this was again hampered by the fact that the Pancratz--Tuitman implementation works in $\PP_\Delta$ instead of $\PP^3$.
For $p=7$, it took \texttt{5h4m} and its memory footprint was around 12GB.
    \begin{table}[h]
    \begin{tabular}{lr|lr|lr}
      $p$ & time & $p$ & time & $p$ & time \\
      \hline
      $2^{8} - 5$  & \texttt{0.73s} & $2^{13} - 1$ &  \texttt{6.41s} & $2^{18} - 5$ & \texttt{2m50s}  \\
      $2^{9} - 3$  & \texttt{0.77s} & $2^{14} - 3$ &  \texttt{11.61s} & $2^{19} - 1$ & \texttt{5m38s}  \\
      $2^{10} - 3$ & \texttt{0.80s} & $2^{15} - 19 $& \texttt{21.98s} & $2^{20} - 3$ & \texttt{11m18s}  \\
      $2^{11} - 9$ & \texttt{2.54s} & $2^{16} - 15$ & \texttt{43.07s} & $2^{21} - 9$ &  \texttt{22m41s}  \\
      $2^{12} - 3$ & \texttt{3.80s} & $2^{17} - 1$ &  \texttt{1m25s} & $2^{22} -3 $ & \texttt{52m37s}  \\
    \end{tabular}
    \caption{Running times for Example~\ref{example:3folddwork}.}
    \label{table:3folddwork}
  \end{table}
\end{example}
\begin{example}
  \label{example:3fold1111}
  Let $\XFq$ be the threefold defined by
  $$
  x_{0}^{8} + x_{1}^{5} x_{2} + x_{0}^{2} x_{1}^{2} x_{2} x_{3} + x_{1} x_{2}^{3} x_{3} + x_{1}^{2} x_{3}^{3} + x_{0} x_{1} x_{2} x_{3} x_{4} + x_{2} x_{3} x_{4}^{2}
  $$
  in the weighted projective space $\PP(1, 14, 18, 20, 25)$.
  The Newton polytope has volume $11/3 \approx 3.67$; by changing the lattice we may instead work with a polytope of volume $1/3 \approx 0.33$. In this setting, the Hodge numbers of $PH^3 (\XFq)$ are $(1,1,1,1)$.
  
  For $p = 2^{20} -3$, we compute the ``interesting'' factor of $Z(\XFq, t)$
  \begin{equation*}
    1 - 618297672 t^1 + 390956360946 p t^2 -618297672 p^3 t^3  + p^6 t^4
  \end{equation*}
  in \texttt{32m33s}. 
    We present the running times for other $p$ in Table~\ref{table:3fold1111}.
  \begin{table}[h]
    \begin{tabular}{lr|lr|lr}
      $p$ & time & $p$ & time & $p$ & time \\
      \hline
      $2^{8} - 5$  & \texttt{1.90s} & $2^{13} - 1$ &  \texttt{18.2s} & $2^{18} - 5$ & \texttt{8m0s}  \\
      $2^{9} - 3$  & \texttt{1.96s} & $2^{14} - 3$ &  \texttt{32.9s} & $2^{19} - 1$ & \texttt{16m8s}  \\
      $2^{10} - 3$ & \texttt{2.06s} & $2^{15} - 19 $& \texttt{1m6s} & $2^{20} - 3$ & \texttt{32m33s}  \\
      $2^{11} - 9$ & \texttt{7.48s} & $2^{16} - 15$ & \texttt{2m4s} & $2^{21} - 9$ &  \texttt{1h5m}  \\
      $2^{12} - 3$ & \texttt{10.9s} & $2^{17} - 1$ &  \texttt{4m3s} & $2^{22} -3 $ & \texttt{2h23m}  \\
    \end{tabular}
    \caption{Running times for Example~\ref{example:3fold1111}.}
    \label{table:3fold1111}
  \end{table}
  \vspace{-24pt}
\end{example}

\begin{example}
  \label{example:3fold1221}
  Let $\XFq$ be the threefold defined by
  $$
  x_{1}^{7} + x_{0}^{5} x_{1} x_{2} + x_{0}^{2} x_{1}^{2} x_{2} x_{3} + x_{0}^{4} x_{2} x_{4} + x_{0} x_{2}^{3} x_{3} + x_{0}^{2} x_{3}^{3} + x_{0} x_{1} x_{2} x_{3} x_{4} + x_{2} x_{3} x_{4}^{2}
  $$ in the weighted projective space $\PP(10, 11, 16, 19, 21)$.
  Again, by choosing the right lattice, we reduce the volume of the Newton polytope from $55/12 \approx 4.58$ to
  $11/24 \approx 0.46$, and the Hodge numbers of $PH^3 (\XFq)$ are $(1,2,2,1)$.
  For $p = 2^{20} -3$, we computed the ``interesting'' factor of $Z(\XFq, t)$
  \begin{multline*}
    1 - 2068001468 t^1 + 3449674041773 p t^2 - 3772715295733197 p^2 t^3 \\
    +3449674041773 p^4 t^4 - 2068001468 p^6 t^5 + p^9 t^6
  \end{multline*}
  in \texttt{2h10m}. We present the running times for other $p$ in Table~\ref{table:3fold1221}.
\begin{table}[h]
    \begin{tabular}{lr|lr|lr}
      $p$ & time & $p$ & time & $p$ & time \\
      \hline
      $2^{8} - 5$  & \texttt{4.47s} & $2^{13} - 1$ &  \texttt{1m8s} & $2^{18} - 5$ &  \texttt{32m25s}  \\
      $2^{9} - 3$  & \texttt{4.60s} & $2^{14} - 3$ &  \texttt{2m8s} & $2^{19} - 1$ & \texttt{1h5m}  \\
      $2^{10} - 3$ & \texttt{4.96s} & $2^{15} - 19 $& \texttt{4m6s} & $2^{20} - 3$ & \texttt{2h10m}  \\
      $2^{11} - 9$ & \texttt{25.8s} & $2^{16} - 15$ & \texttt{8m18s} & $2^{21} - 9$ &  \texttt{4h17m}  \\
      $2^{12} - 3$ & \texttt{39.1s} & $2^{17} - 1$ &  \texttt{16m31s} & $2^{22} -3 $ & \texttt{9h33m}  \\
    \end{tabular}
    \caption{Running times for Example~\ref{example:3fold1221}.}
    \label{table:3fold1221}
  \end{table}
\end{example}

\begin{example}
  \label{example:3foldnonwps}
  Let $\XFq$ be the closure in $\PP_\Delta$ (which is not a weighted projective space)
of the threefold defined by the Laurent polynomial
  \begin{equation*}
    x y z^2 w^3 + x + y + z - 1 + y^{-1} z^{-1} + x^{-2} y^{-1} z^{-2} w^{-3} = 0.
\end{equation*}
Choosing the correct lattice reduces the volume of the Newton polytope from $9/8 \approx 1.12$
to $3/8 \approx 0.38$, and the Hodge numbers of $PH^{3}(\XFq)$ are $(1,2,2,1)$.
For $p = 2^{20} -3$, we computed the ``interesting'' factor of $Z(\XFq, t)$
  \begin{multline*}
    (1 + 718 p t + p^3 t^2)  
    \cdot (1 + 1188466826 t^1 + 1915150034310 p t^2 +  1188466826 p^3 t^3 + p^6 t^4)
  \end{multline*}
  in \texttt{1h15m}. We present the running times for other $p$ in Table~\ref{table:3foldnonwps}.
  \begin{table}[h]
    \begin{tabular}{lr|lr|lr}
      $p$ & time & $p$ & time & $p$ & time \\
      \hline
      $2^{8} - 5$  & \texttt{2.74s} & $2^{13} - 1$ &  \texttt{39.28s} & $2^{18} - 5$ & \texttt{18m34s}  \\
      $2^{9} - 3$  & \texttt{2.80s} & $2^{14} - 3$ &  \texttt{1m13s} & $2^{19} - 1$ & \texttt{38m8s}  \\
      $2^{10} - 3$ & \texttt{3.00s} & $2^{15} - 19 $& \texttt{1m21s} & $2^{20} - 3$ & \texttt{1h15m}  \\
      $2^{11} - 9$ & \texttt{14.86s} & $2^{16} - 15$ & \texttt{4m45s} & $2^{21} - 9$ &  \texttt{2h32m}  \\
      $2^{12} - 3$ & \texttt{22.32s} & $2^{17} - 1$ &  \texttt{9m12s} & $2^{22} -3 $ & \texttt{5h39m}  \\
    \end{tabular}
    \caption{Running times for Example~\ref{example:3foldnonwps}.}
    \label{table:3foldnonwps}
  \end{table}
\end{example}

\vspace{-30pt}
\section{Cubic fourfolds}

For our final example, we consider a cubic fourfold. For $X$ a smooth cubic fourfold in $\PP^5$,  $\dim H^4(X) = 23$ and the Hodge numbers are $(0, 1, 21, 1, 0)$.

In this example,
we worked modulo $p^6$ in order to obtain $Q(t)$.

\begin{example}
  \label{example:4fold}
  Let $\XFq$ be the smooth projective cubic fourfold in $\PP^5_{\FF_p}$ defined by
\begin{equation*}
  x_0^3 + x_1^3 + x_2^3 + (x_0 + x_1 + 2x_2)^3 + x_3^3 + x_4^3  + x_5^3 + 2(x_0+x_3)^3 + 3(x_1+x_4)^3 + (x_2+x_5)^3;
\end{equation*}
it is nondegenerate in $\PP^5$. For $p=31$, in \texttt{21h31m} we computed
  \begin{equation*}
    Z(\XFq, t) ^{-1} = (1 -  t) (1 - p t) (1 - p^2 t) (1-p^3 t) (1 - p^4 t) Q(t)
  \end{equation*}
  where the ``interesting'' factor is an irreducible Weil polynomial given by
  \begin{align*}
    p Q(t/p^{2}) = &\,
    p \!-\! 7t^1 \!+\! 21t^2 \!-\! 52t^3 \!-\!8t^4 \!-\! 28t^5 \!+\! 21t^6 \!+\! 35t^7 
    \!+\! 39t^9 \!+\! 62t^{10}\!+\! 23t^{11}  \\ 
    &\!+\! 62t^{12} \!+\! 39t^{13} \!+\! 35t^{15} \!+\! 21t^{16} \!-\!28t^{17} \!-\! 8t^{18} \!-\! 52t^{19} \!+\! 21 t^{20} \!-\! 7t^{21} \!+\! pt^{22};
  \end{align*}
the coefficient of $t^1$ may be confirmed independently by counting $\XFq(\FF_p)$ using the Sage function \verb+count_points+.
  For $p = 127$ the running time was \texttt{23h15m} and for $p = 499$ it was \texttt{24h55m};
  in both cases, the ``interesting'' factor is an irreducible Weil polynomial.
  In these computations, the memory footprint was around 36.5GB.

  In dimension 4, the bottleneck seems to be the linear algebra required to set up  controlled reduction.  For $p=31$, more than half of the runtime (\texttt{15h32m}) is spent solving a linear problem of size $\num{15504} \times \num{37128}$ modulo $p^6$.
  With careful handling of this step (e.g., avoiding Hensel lifts) we would expect a significant speedup.
 
Note that the defining equation for $\XFq$ is quite sparse. To assess the effect of this sparsity, as well as to cross-check the answer, we recomputed $Z(\XFq,t)$ after applying a random linear change of variables to obtain a dense defining equation. For $p=31$, in  \texttt{27h55m} and using about 41GB we obtained the same value for $Z(\XFq,t)$ as above.
\end{example}

Recall from the introduction that a cubic fourfold is \emph{coplanar} if it is defined by an expression  $\sum_{i=1}^{10} a_i^3$ in which each $a_i$ is a linear form and some four of the $a_i$ are linearly dependent. Ranestad--Voisin \cite{ranestad-voisin-17} show that the Zariski closure $D_{copl}$ of the coplanar locus is a divisor on the moduli space of cubic fourfolds. Example~\ref{example:4fold} is a non-special coplanar cubic fourfold: the existence of a primitive codimension-2 cycle class would imply\footnote{While it is not needed here, the Tate conjecture for ordinary cubic fourfolds is known \cite{levin}.}  that $p Q(t/p^2)$ has a cyclotomic factor. This shows (modulo detailed analysis of the algorithm) that $D_{copl}$ is not a Noether--Lefschetz divisor.

\bibliographystyle{alpha}
\bibliography{biblio-compressed}

\end{document}